\definecolor{darkred}{rgb}{1,0,0} %can change the intensity in [0,1]
\definecolor{darkgreen}{rgb}{0,0.8,0}
\definecolor{darkblue}{rgb}{0,0,1}
 \def\bt{\begin{theorem}}
 	\def\el{\end{lemma}}
 \def\bl{\begin{lemma}}
 	\def\et{\end{theorem}}
 \def\bp{\begin{proposition}}
 	\def\ep{\end{proposition}}
 \def\bd{\begin{definition}}
 	\def\ed{\end{definition}}
 \def\br{\begin{remark}}
 	\def\er{\end{remark}}
 \newcommand{\Om}{\Omega}
 \def\R{{\mathbb R}}
 \def\I{{\mathcal I}}
 \def\C{\mathbb C}
 \def\B{\mathbb B}
 \def\N{{\mathbb N}}
\def\S{{\mathbb S}}
\def\S1{{\mathbb S^1}}
\def\H{{\mathbb H}} 
\def\i{{\bold i}}
\def\j{{\bold j}}
\def\k{{\bold k}}
 \def\label#1{\label{#1}{\bf(#1)}~}
 \numberwithin{equation}{section}
 \theoremstyle{plain} % just in case the style had changed
 \theoremstyle{plain}
 \newtheorem*{theorem*}{Theorem}
 \newtheorem{theorem}{Theorem}[section]
 \newtheorem{lemma}[theorem]{Lemma}
 \newtheorem{proposition}[theorem]{Proposition}
 \theoremstyle{definition}
 \newtheorem{definition}[theorem]{Definition}
 \theoremstyle{remark}
 \newtheorem{remark}[theorem]{Remark}
 \newcommand{\p}{\partial}
 \newcommand{\dbar}{\bar\partial}
 \DeclareMathOperator{\dist}{dist}
 \def\Re{\text{Re}}
\begin{document}
 	
 	\title[Extension of regular functions of two quaternionic variables]{An extension theorem for regular functions of two quaternionic variables}  
\author{Luca Baracco}
\address{Dipartimento di Matematica Tullio Levi-Civita, Universit\`a di Padova, via Trieste 63, 35121 Padova, Italy}
\email{baracco@math.unipd.it}

\author{Martino Fassina}
\address{Department of Mathematics, University of Illinois, 1409 West Green
Street, Urbana, IL 61801, USA}
\email{fassina2@illinois.edu}

\author{Stefano Pinton}
\email{spinton84@gmail.com}
%\address{Dipartimento di Matematica Tullio Levi-Civita, Universit\`a di Padova, via Trieste 63, 35121 Padova, Italy}
%\email{pinton@math.unipd.it}
%\thanks{M.F. acknowledges support of NSF grant 13-61001.}
       % Enter your title between curly braces
 %\author[ Khanh]{Tran Vu Khanh}
 	%	\address{Tran Vu Khanh}
 		%\address{School of Mathematics and Applied Statistics, University of Wollongong, NSW, Australia,  2522}
 %		\email{tkhanh@uow.edu.au}
 	
 	%	\thanks{Research was partially supported by the Australian Research Council
 		%	DE160100173. This work was done in part while the author was  a visiting member at the Vietnam Institute for Advanced Study in Mathematics %(VIASM). He would like to thank the institution for its hospitality and support.}

 \begin{abstract} 
	%We prove that a function of several quaternionic variables is regular in the sense of Fueter if and only if it is regular in each variable separately, thus providing a quaternionic analog 
%of a celebrated theorem of Hartogs. 
For functions of two quaternionic variables that are regular in the sense of Fueter, we establish a result similar in spirit to the Hanges and Tr\`eves theorem. Namely, we show that a ball contained in the boundary of a domain is a propagator of regular extendability across the boundary. 
\end{abstract}
\subjclass[2010]{Primary 30G35.  }
\keywords{Quaternions, Cauchy-Fueter operators, propagation of extendability.}
  	\maketitle
  	\maketitle

 %***********************************
 % Introduction 
 %***********************************
% \tableofcontents 
\section{Introduction and Main Results}

The algebra $\H$ of quaternions was introduced in 1843 by William R. Hamilton in the attempt to extend the system of complex numbers. Recall that a quaternion $p\in\H$ can be written as
% Recall that $\H$ is naturally isomorphic to $\R^4$. Any element $p\in\H$ can in fact be represented as 
\begin{equation}\label{p}
p=x_0+x_1\i +x_2\j+x_3\k,\quad x_i\in\R,
\end{equation}
where $\i,\j,\k$ are imaginary units ($\i^2=\j^2=\k^2=-1$) satisfying the following multiplicative relations:
$$ \i\j=-\j\i=\k,\quad \j\k=-\k\j=\i,\quad \k\i=-\i\k=\j.$$
It is natural to ask if there exists a class of functions on quaternions that replicates the classical theory of holomorphic functions in 
one and several complex variables. One approach to this question is to define the so called Cauchy-Fueter operator %\cite{F1} dedined the differential opertaor,
\begin{equation*}\label{Futt}
 \dbar := \p_{x_0} + \i \p_{x_1}+\j\p_{x_2}+\k\p_{x_3},
\end{equation*}
and declare a function to be (left) regular if it satisfies $\dbar f=0$, in analogy with the usual Cauchy-Riemann equations. This formulation is due to Moisil \cite{M31}. Fueter and his students then gave a great impulse to the development of the theory of regular functions \cite{F1,F2,Ha}. (See also the survey paper \cite{S}). %the operator %\eqref{Futt} 
%does not appear, and regular functions are defined (in an equivalent way) by the vanishing of an integral. 

Fueter and his school were able to obtain a quaternionic counterpart for regular functions of many classical theorems of complex analysis in one variable such as Cauchy's theorem, 
Liouville's theorem and Laurent series expansion. (See \cite{D} for a survey of these results). Since then, 
the theory of regular functions has been greatly developed, and the parallel with holomorphic functions has been extended to several variables. For instance, Pertici \cite{P} established a Bochner-Martinelli integral formula for regular functions in $\H^n$, and used it to prove an analog of Hartogs extension theorem in the quaternionic setting. A purely algebraic proof of Hartogs extension for regular functions appears in \cite{CSSS}.

%We remark that a different theory of regular functions on quaternions has been introduced by Gentili and Struppa \cite{G}. These so called ``slice regular'' functions also constitute a field of active research. 

In this paper we consider (left) regular functions in the sense described above. We pursue the parallel with holomorphic functions 
by proving an analog of two classical results in the theory of several 
complex variables. 

A famous theorem of Hartogs states that a function is holomorphic in $\C^n$ if and only if it satisfies the Cauchy-Riemann equations in each variable separately (\cite[Theorem 2.2.8]{H}). 
In other words, no joint regularity whatsoever is required to ensure analyticity. 
% Aggiungere che quello che facciamo vedere in questo articolo e' un risultato di 
We observe that a similar result holds for regular functions in
$\mathbb{H}^n$.

\begin{proposition}\emph{(Proposition \ref{main})}\label{t1}
Let $\Omega$ be an open domain in $\H^n$ and $f\colon\Omega\rightarrow \H$ a function satisfying the Cauchy-Fueter equations in each variable separately. Then $f$ is regular in $\Omega$. 
\end{proposition}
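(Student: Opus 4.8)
The plan is to transpose the proof of the complex Hartogs theorem (\cite[Theorem 2.2.8]{H}) to the quaternionic setting, using the single-variable Cauchy--Fueter theory in place of the one-variable Cauchy integral formula. Write the $\ell$-th variable as $p^{(\ell)}=x_0^{(\ell)}+x_1^{(\ell)}\i+x_2^{(\ell)}\j+x_3^{(\ell)}\k$ and let $\dbar_\ell$ denote the Cauchy--Fueter operator acting in that block. Two elementary observations drive the argument. First, $\dbar_\ell$ is elliptic: its symbol at a covector $\xi^{(\ell)}\neq 0$ is the nonzero quaternion $\xi_0^{(\ell)}+\xi_1^{(\ell)}\i+\xi_2^{(\ell)}\j+\xi_3^{(\ell)}\k$, which acts invertibly on $\H$. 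Moreover, writing $\dbar_\ell^{c}=\p_{x_0^{(\ell)}}-\i\,\p_{x_1^{(\ell)}}-\j\,\p_{x_2^{(\ell)}}-\k\,\p_{x_3^{(\ell)}}$ one checks by direct expansion (the mixed second-order terms cancel because the units anticommute) that
\[
\dbar_\ell^{c}\,\dbar_\ell=\Delta_\ell ,
\]
the Laplacian in the four real variables of the $\ell$-th block. Consequently a function regular in the $\ell$-th variable alone is harmonic, hence real-analytic, in that variable, and it satisfies a single-variable Cauchy--Fueter (Bochner--Martinelli) reproducing formula on balls. This furnishes the two ingredients we need: Cauchy-type estimates bounding derivatives in the $\ell$-th variable by the supremum of $f$ on a slightly larger ball, and an integral representation of $f$ over a sphere in the $\ell$-th variable with a smooth kernel.

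With these tools the classical mechanism applies almost verbatim. Since $f$ is separately continuous it is jointly (Borel) measurable. The crucial step is to upgrade this to joint local boundedness: as in Hörmander's proof, one runs a Baire category argument, decomposing $\Om$ into the sets on which $f$ is bounded in one variable uniformly over a product of balls in the remaining variables, to produce a nonempty open set on which $f$ is bounded, and then the single-variable Cauchy estimates propagate boundedness to a full neighborhood. Once $f$ is locally bounded, an Osgood-type lemma finishes matters: feeding $f$ into the single-variable reproducing formula one variable at a time and invoking dominated convergence shows that the resulting iterated integral depends continuously, and then smoothly, on all variables jointly and reproduces $f$. Hence $f$ is jointly smooth.

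It remains to conclude joint regularity. Once $f$ is jointly $C^1$, the hypothesis $\dbar_\ell f=0$ holds classically for every $\ell$, which is exactly the assertion that $f$ is regular on $\Om$; equivalently, $f$ is now a genuine distributional solution of the overdetermined system $\{\dbar_\ell f=0\}_{\ell=1}^{n}$, which is elliptic (for any nonzero covector some block $\xi^{(\ell)}$ is nonzero, and the corresponding symbol is invertible), so hypoellipticity gives $f\in C^\infty$ directly. I expect the Baire-category/local-boundedness step to be the main obstacle: it is precisely the point at which separate regularity is promoted to joint regularity with no a priori joint continuity, which is the genuine content of the Hartogs phenomenon. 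The remaining steps — the factorization, the single-variable integral formula and estimates, and the final elliptic regularity — are routine.
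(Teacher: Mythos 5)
Your reduction steps do match the first half of the paper's argument: separate regularity gives separate continuity, the Baire category argument produces a nonempty open product set (after normalization, a strip $\B_1\times\B_\epsilon$) on which $f$ is bounded, and an Osgood-type argument (iterated Cauchy--Fueter formula, Fubini, dominated convergence) shows that $f$ is jointly regular wherever it is locally bounded; these are precisely Lemmas \ref{lemmacont} and \ref{lemmaboun} in the paper. The genuine gap is the sentence ``then the single-variable Cauchy estimates propagate boundedness to a full neighborhood.'' Cauchy-type estimates bound derivatives of $f$ by its supremum on a \emph{larger} set; they cannot enlarge the set on which $f$ is known to be bounded. Nor does ellipticity help at this stage: $f$ is only known to be a (smooth, hence distributional) solution on the strip, not on all of $\Omega$, so there is nothing for hypoellipticity to act on outside it. This propagation from the strip to the whole domain is the actual content of the Hartogs phenomenon, and in H\"ormander's proof, which you propose to transpose, it is not accomplished by Cauchy estimates: it is accomplished by applying Hartogs' lemma on plurisubharmonic functions to $u_\alpha=\frac 1{|\alpha|}\log|a_\alpha|$, where the $a_\alpha$ are the Taylor coefficients of $f$ in the ``good'' variable, which in the complex setting are \emph{holomorphic} functions of the remaining variables.

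Moreover, this step does not transpose ``almost verbatim'' to $\H^n$, and this is exactly where the paper must do new work. The Taylor coefficients $g_\alpha(p)=\partial^\alpha_q f(p,0)/\alpha!$ are regular functions of a quaternionic variable, i.e.\ functions on a domain in $\R^4$: there is no complex structure with respect to which $\log|g_\alpha|^{1/|\alpha|}$ is plurisubharmonic, so Hartogs' lemma is not applicable as it stands. The paper circumvents this with Avanissian's complexification technique: since the Cauchy--Fueter kernel is rational, each $g_\alpha$ extends holomorphically to a function $\tilde g_\alpha$ on a domain $\Gamma\subset\C^4$; the functions $u_\alpha=\log|\tilde g_\alpha|^{1/|\alpha|}$ are then plurisubharmonic and locally uniformly bounded above; and because the separate-regularity hypothesis controls $\limsup_\alpha u_\alpha$ only on the \emph{real} slice $\Gamma\cap\R^4$, one needs Lelong's theorem (Theorem \ref{Ll}), a refinement of Hartogs' lemma for bounds known only on real points, to conclude uniform convergence of the Taylor series on $\B_1\times\I_{\frac{\sqrt{2}-1}{2}}$ and hence regularity beyond the strip, after which the argument is iterated. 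Without this complexification-plus-Lelong mechanism (or, alternatively, an appeal to Avanissian's theorem that separately harmonic functions are harmonic, which packages the same idea), your argument stops at regularity on the strip produced by the Baire category step.
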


Proposition \ref{t1} follows easily from a theorem of Avanissian \cite[Theorem 1]{A67} on separately harmonic functions (see Remark \ref{easyproof}). We decided to present a detailed proof in order to introduce the complexification techniques from \cite{A67}, which play a crucial role in the proof of the main result.
 
The main theorem of the paper extends to the quaternionic setting a well-known general principle in complex analysis: complex curves in a hypersurface $M$ are propagators of holomorphic extendability from either side of $M$. (See \cite[Chapter 1.7]{Z}).  
The main result in this direction is due to Hanges and Tr\`eves \cite{HT}, and was proved using microlocal techniques. See the papers \cite{B05} and \cite{MSZ} for other instances of this phenomenon. Here is our result.

\begin{theorem}\emph{(Theorem \ref{hangestreves})}\label{t2} Let $\Omega\subset \H^2$ be a domain with $C^2$ boundary $\partial\Omega$. Let $\B_1$ denote the unit ball in $\H$ centered at the origin, and assume that $\gamma:=\B_1\times \{0\}$ is contained in $\p \Omega$.
Let $U$ be a neighborhood of $(1,0)$ in $\H^2$,
and suppose that $f$ is a regular function on $\Omega\cup U$. Then there exists a neighborhood $W$ of $\gamma$ in $\H^2$ and a regular function $F$ on $W$ such that $F_{|W\cap \Omega}=f_{|W\cap \Omega}$.
That is, there exists a regular extension of $f$ to a neighborhood of $\gamma$.
\end{theorem}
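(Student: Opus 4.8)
The plan is to reduce the statement to a problem of analytic continuation in $\C^{8}$ by complexifying, exactly in the spirit of the complexification techniques of \cite{A67} used for Proposition \ref{t1}, and then to propagate the extension along the complexified ball by a continuity argument.

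\emph{Step 1: complexification.} A regular function on $\Om\cup U$ is separately harmonic in the two groups of four real variables, since composing the Cauchy--Fueter operator $\dbar_{p}$ with its conjugate gives the Laplacian $\Delta_{p}$, and similarly in $q$. Hence, by Avanissian's theorem as in the proof of Proposition \ref{t1}, $f$ is real analytic on $\Om\cup U$, and I would complexify the eight real coordinates to obtain a holomorphic $F$ on a neighborhood $\widetilde V\subset\C^{8}$ of $\Om\cup U$ restricting to $f$ on the real points. Complexifying the Cauchy--Fueter equations turns regularity into a holomorphic overdetermined system whose second order consequence is the complexified Laplacian; in particular the real quaternionic restriction of \emph{any} holomorphic solution is again regular, which is what will let me descend at the end.

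\emph{Step 2: geometry of $\gamma$.} The ball $\gamma=\B_{1}\times\{0\}$ is a totally real $4$-dimensional submanifold of $\R^{8}$, so its complexification $\gamma^{\C}$ is a connected $4$-dimensional complex submanifold of $\C^{8}$ through every point of $\gamma$. Since $\gamma\subset\p\Om$ and the tangent space of $\gamma$ fills all $p$-directions, the outer normal to $\p\Om$ along $\gamma$ points in the $q$-variable and is therefore transverse to $\gamma^{\C}$; thus the direction in which $f$ must be extended across $\p\Om$ is transverse to $\gamma^{\C}$. Writing $\p\Om$ near $\gamma$ as a graph with $\Om$ on one side, the hypothesis that $f$ is regular on the full neighborhood $U$ of $(1,0)$ says exactly that the domain of holomorphy of $F$ already contains a full neighborhood of the point of $\gamma^{\C}$ over $(1,0)$, the missing side included.

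\emph{Step 3: propagation.} The core of the proof is to enlarge the domain of holomorphy of $F$ from a full neighborhood of one point of $\gamma^{\C}$ to a full neighborhood of all of $\gamma^{\C}$. I would achieve this by a continuity principle (Hartogs' Kontinuit\"atssatz): using that $\gamma^{\C}$ is complex and connected, I slide a family of small analytic discs modeled on $\gamma^{\C}$, starting in the two-sided region over $(1,0)$ and keeping their boundaries inside the current domain of holomorphy; this forces $F$ to continue across the centers and sweeps out a full neighborhood of $\gamma^{\C}$, filling in the missing side over the entire ball.

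\emph{Step 4: descent and the main obstacle.} Restricting the enlarged $F$ to the real quaternionic points produces a function on a neighborhood $W$ of $\gamma$ that is regular by Step 1 and agrees with $f$ on $W\cap\Om$ by the identity principle for real-analytic functions, which is the claim. I expect Step 3 to be the genuine difficulty. Two issues demand care: first, $\p\Om$ is only $C^{2}$ and cannot be complexified to a complex hypersurface, so the domain of holomorphy of $F$ near $\p\Om$ must be controlled by hand rather than read off an analytic boundary; second, the family of discs must be built so that its boundaries provably remain in the region already dominated by $F$ and so that the continued function is single-valued along $\gamma^{\C}$, which is where the connectedness of $\gamma^{\C}$ and the transversality from Step 2 enter.
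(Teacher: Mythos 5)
Your Steps 1, 2 and 4 (complexification, the geometry of $\gamma^{\C}:=$ the complexified ball, and descent to the real points by the identity principle) are consistent with what the paper does, but Step 3 --- which you yourself flag as ``the genuine difficulty'' --- is the entire content of the theorem, and it is asserted rather than proved. The difficulty is not merely technical. After complexification, the holomorphic extension lives on a neighborhood $\widetilde V$ of $\Om\cup U$ in $\C^8$ whose thickness in the $q$-directions (and in the imaginary directions) tends to $0$ as one approaches $\p\Om$; in particular $\widetilde V$ contains no point of $\gamma$ itself except near $(1,0)$. A disc ``modeled on $\gamma^{\C}$'' whose boundary lies in this pinched domain either lies entirely inside it (in which case the Kontinuit\"atssatz yields nothing new), or must be made to bow across $\{y_0=0\}$ over points of $\gamma$ far from $(1,0)$ while its boundary stays in $\widetilde V$ --- and producing such discs, with a gain that does not degenerate as one slides toward $(-1,0)$, is exactly the statement to be proved. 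Moreover, with small discs each application of the continuity principle extends the function only to a neighborhood whose size you do not control; without a quantitative lower bound the swept region can pinch to zero thickness before it covers all of $\gamma$, and single-valuedness of the continuation must also be argued. None of these points is addressed in the proposal.

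The paper closes precisely this gap by an analytic, quantitative version of your disc idea: it works on the single complex disc $\tau\mapsto(\tau,0,0,0,-\epsilon,0,0,0)$ inside the complexification of the translated ball $\B_1\times\{-\epsilon\}\subset\Om$, and applies the sub-mean value property and Fatou's lemma to the subharmonic functions $u_\alpha(\tau)=\log\left|\p^{\alpha}_q\tilde f(\tau,-\epsilon)/\alpha!\right|^{1/|\alpha|}$, which are uniformly bounded above by Remark \ref{rem} and \eqref{bdalfa}. On most of the boundary circle one only has the interior bound $\limsup u_\alpha\le-\log((\sqrt{2}-1)\epsilon)$, but on a fixed small arc near $\tau=1$ the hypothesis on $U$ improves this to $-\log\delta$ with $\delta$ independent of $\epsilon$; the resulting two-constants estimate gives $r_q(0,-\epsilon)\ge(\sqrt{2}-1)\,\epsilon\,(\delta/\epsilon)^{\delta/(2\pi)}$, which exceeds $\epsilon$ once $\epsilon$ is small. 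This is the mechanism that pushes the Taylor series in $q$ across $\p\Om$; Proposition \ref{proposizione4} (resting on Lelong's Theorem \ref{Ll}) then upgrades these pointwise radius bounds to a genuine joint regular extension on a neighborhood of $\gamma$, which restricts back to a regular extension of $f$. If you wish to keep your geometric Step 3, you would have to supply an equivalent quantitative estimate; as written, the proposal does not prove the theorem.
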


Figure \ref{fig1} gives a simple illustration of Theorem \ref{t2}. One starts with a function $f$ that is regular in the grey region $\Omega\cup U$. The ball $\gamma$ in the boundary $\p\Omega$ propagates the regular extension of $f$ across $\p\Omega$ to a full neighborhood $W$ of $\gamma$. 

\begin{figure}[h]
\centering
 \begin{tikzpicture}[scale=0.8]
%Circ/.style={draw,shape=circle,minimum size=3cm, node contents={}}
                        
\draw[black,thin,fill=gray, fill opacity = 0.1] (2.6,0) circle (1.5cm);

\draw 
[line width=0.6mm, black ] (-2.6,0) -- (2.6,0);
\path [line width=0.2mm, black, fill=gray, fill opacity = 0.1 ](2,0)..controls (3,0) .. (5,-2)--(-6,-2)..controls (-3,0) .. (-2,0)--(2,0);
\draw [line width=0.001mm, black](-2.6,0)..controls(0,-0.15)..(2.6,-0.5)--(2.6,0.5)..controls(0,0.15)..(-2.6,0);
\path[pattern=north west lines, pattern color=blue] (-2.6,0)..controls(0,-0.15)..(2.6,-0.5)--(2.6,0.5)..controls(0,0.15)..(-2.6,0);
%\path[fill=gray, fill opacity = 0.05] (-2.6,-0.5) rectangle (2.6,0.5);
\draw [line width=0.2mm, black](2,0)..controls (3,0) .. (5,-2);
\draw [line width=0.2mm, black](-6,-2)..controls (-3,0) .. (-2,0)--(2,0);
\draw[fill] (2.6,0) circle [radius=0.1] ; 
\node at (3.1,0.3) {$(1,0)$};
%\draw [thin,dashed, fill=gray, fill opacity = 0.1] (E) to[out=60,in=-60] (0,3) to[out=240,in=-240] (E);
%\draw [-] (-5,0) -- (5,0);
%\draw [->] (0,0) -- (0,5);
%\draw [->] (0,0) -- (-3.6,-3.6);

%\coordinate (M) at (3,0);
%\coordinate (P) at (150:1.5);

%\coordinate (L) at (30:2);
%\coordinate (Q) at (150:2);

%\draw[thick,dashed] [shorten >=-0.5cm,shorten <=-0.5cm]    (M) -- (P);

%\draw[thick]    (M) -- (P);
\node at (-2,0.3) {$\bm{\gamma}$};
\node at (0,-1.3) {$\Omega$};
\node at (-4.3,-0.4) {$\p\Omega$};
\node at (3.5,1.5) {$U$};
\node at (0,0.7) {$W$};
%\node at (-0.3,4.5) {$x_1$};
%\node at (4.5,-0.3) {$x_2$};
%\node at (1,3.5) { $T_{(1,0)}M$};
%\node at (3.7,0.9) { $T_{(0,1)}M$};
%\node[rotate=45] at (-1.0,0.2) {{\bf $M_p^{sing}$}};

%\path [draw=none,fill=gray, fill opacity = 0.1,even odd rule] (0,0) circle (3.4) (0,0) circle (2.6);

 %\tkzDefPoint(30:1.5){R}  \tkzDefPoint(0,3){B}
  %\tkzDefPoint(150:1.5){T}  \tkzDefPoint(0,-3){D}
  %\tkzDrawSegments(A,B C,D)
  %\tkzInterLL(D,B)(R,T) \tkzGetPoint{E}
%\draw[fill] (M) circle [radius=0.1] ; 

%\draw [thin, dashed] (-3,0) to[out=-30,in=210] (3,0) to[out=150,in=30] (-3,0);
%\draw [thin, fill=gray] (3.2,0.6) -- (2.8,0.4) -- (2.8,-0.6) -- (3.2,-0.4) -- (3.2,0.6);
%\draw [thin, fill=gray] (0.6,3.2) -- (-0.4,3.2) -- (-0.6,2.8) -- (0.4,2.8) -- (0.6,3.2);
%\draw [thin,dashed, fill=gray, fill opacity = 0.1] (E) to[out=60,in=-60] (0,3) to[out=240,in=-240] (E);
\end{tikzpicture}
\caption{} \label{fig1}
\end{figure}

The paper is organized as follows. 
In Section \ref{Section 1} we give a self-contained exposition of all the results that we need from the theory of regular functions in one quaternionic variable. Most of the material in this section is well known.
In Section \ref{Section 2} and Section \ref{Section 3} we present the proofs of Proposition \ref{t1} and Theorem \ref{t2} respectively.

\section{Regular functions of one quaternionic variable}\label{Section 1}
We start by recalling some definitions and setting the notation. 
For an element $p\in\H$ as in \eqref{p}, we define the conjugate $\bar{p}:=x_0-x_1\i -x_2\j-x_3\k$. We then let $$|p|:=\sqrt{p\bar{p}}=\sqrt{x_0^2+x_1^2+x_2^2+x_3^2}.$$
Note that if $p\in\H, p\neq 0$, then $p^{-1}=\bar{p}/|p|^2$.

We now introduce the Cauchy-Fueter operators 
$$ \dbar := \p_{x_0} + \i \p_{x_1}+\j\p_{x_2}+\k\p_{x_3} \quad\,\,\, \p:=\p_{x_0} - \i \p_{x_1}-\j\p_{x_2}-\k\p_{x_3}$$
$$ \dbar^R := \p_{x_0} +  \p_{x_1}\i+\p_{x_2}\j+\p_{x_3}\k \quad \,\,\,\p^R:=\p_{x_0} - \p_{x_1}\i-\p_{x_2}\j-\p_{x_3}\k.$$

\bd 
Let $\Omega\subset\H$ be an open set. A $C^1$ function $f\colon \Omega\rightarrow\H$ is said to be {\em left regular} if $\bar\partial f=0$ in $\Omega$ and {\em right regular} if $\bar\partial^R f=0$ in $\Omega$.
\ed
Throughout the paper we will deal with left regular functions. We will thus use the term {\em regular function} to mean a left regular function. Obviously, a similar theory can be formulated for right regular functions. 
 
As stated in the next theorem, regular functions satisfy an analog of the Cauchy integral representation formula. 
\bt\label{Fueterthm} Let $\Omega\subset\H$ be an open set and $f\colon\Omega \rightarrow \H$ a left regular function. Let $\Omega'$ be an open bounded set with smooth boundary such that $\overline{\Omega'}\subset\Omega$. 
Then, for every $p_0\in \Omega'$, we have
\begin{equation}\label{CF}
f(p_0)=\frac 1{2\pi^2}\int_{\partial \Omega'} \frac{(p-p_0)^{-1}}{|p-p_0|^2} D(p) f(p),
\end{equation}
where $D(p):=dx_1\wedge dx_2\wedge dx_3 -\i dx_0\wedge dx_2 \wedge dx_3 +\j dx_0\wedge dx_1 \wedge dx_3-\k dx_0 \wedge dx_1 \wedge dx_2 $.
\et

It follows from the Cauchy-Fueter integral formula \eqref{CF} that regular functions are real analytic and satisfy the maximum principle. The proofs are similar to those of the corresponding statements for 
holomorphic functions of one complex variable. 

The next lemma shows another important consequence of the integral representation formula \eqref{CF}: a bounded regular function admits an estimate for its partial derivatives, uniform on compact sets.
\begin{lemma}\label{Derivative}
 Let $\Omega\subset\H$ be an open set and $f\colon\Omega\rightarrow\H$ a left regular function. Assume that $|f|\leq M$ on $\overline{\Omega}$ for some constant $M\geq 0$. Then, for each $p_0\in\Omega$ and $i=0,1,2,3$, we have
\begin{equation*}
 |\partial_{x_i}f(p_0)|\leq C\,\frac{M}{\dist(p_0,\partial\Omega)},
\end{equation*}
where $C$ is a universal constant independent of $f$ and $\Omega$.
\end{lemma}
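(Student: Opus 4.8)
The plan is to mimic the classical derivation of the Cauchy estimates for holomorphic functions, using the Cauchy-Fueter representation \eqref{CF} in place of the Cauchy integral formula, and exploiting the multiplicativity $|pq|=|p|\,|q|$ of the quaternionic norm to neutralize non-commutativity.

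First I would fix $p_0\in\Omega$, set $d:=\dist(p_0,\partial\Omega)$, and apply Theorem \ref{Fueterthm} on a ball. For any $r<d$ the closed ball $\overline{B(p_0,r)}$ lies in $\Omega$, so \eqref{CF} with $\Omega'=B(p_0,r)$ yields, for every $q\in B(p_0,r)$,
\[
f(q)=\frac{1}{2\pi^2}\int_{\partial B(p_0,r)}\frac{(p-q)^{-1}}{|p-q|^2}\,D(p)\,f(p).
\]
This is an identity in $q$, and the integrand depends on $q$ only through the kernel $K(p,q):=(p-q)^{-1}/|p-q|^2=\overline{(p-q)}/|p-q|^4$, which is smooth in $q$ as long as $q$ stays off $\partial B(p_0,r)$.

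Next I would differentiate under the integral sign. For $q$ near $p_0$ and $p\in\partial B(p_0,r)$ the kernel and its first $q$-derivatives are continuous and bounded, which justifies the interchange; since $q$ enters only through $K$, this gives
\[
\partial_{x_i}f(p_0)=\frac{1}{2\pi^2}\int_{\partial B(p_0,r)}\Big(\partial_{x_i}^{q}K(p,q)\big|_{q=p_0}\Big)\,D(p)\,f(p).
\]
Writing $u=p-q$, the derivative $\partial_{x_i}^{q}K$ is a quaternion-valued rational expression homogeneous of degree $-4$ in $u$, so there is a universal constant $C'$ with $|\partial_{x_i}^{q}K(p,p_0)|\le C'/|p-p_0|^4=C'/r^4$ for $p\in\partial B(p_0,r)$. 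I would then use multiplicativity of the norm to bound the integrand by $(C'/r^4)\,|D(p)|\,M$. A short computation shows that on the sphere $D(p)$ equals the outward unit normal (viewed as a quaternion) times the surface element, so $|D(p)|=dS$; combined with $\mathrm{Area}(\partial B(p_0,r))=2\pi^2r^3$ this gives
\[
|\partial_{x_i}f(p_0)|\le\frac{1}{2\pi^2}\cdot\frac{C'}{r^4}\cdot M\cdot 2\pi^2r^3=\frac{C'M}{r}.
\]
Finally, letting $r\to d^-$ yields the assertion with $C=C'$.

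The only place that requires genuine computation, and hence the main obstacle, is the estimate of $\partial_{x_i}^{q}K$: one must differentiate $\overline{(p-q)}/|p-q|^4$ explicitly and check the $|p-q|^{-4}$ decay, and must identify $|D(p)|$ with surface measure on the sphere. The non-commutativity of $\H$ is not an obstacle here, since the norm is multiplicative and the order of the factors $K\,D\,f$ is preserved throughout.
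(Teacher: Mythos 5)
Your proof is correct and takes exactly the route the paper intends: Lemma \ref{Derivative} is stated there without an explicit proof, as a direct consequence of the integral representation \eqref{CF}, and your argument---applying \eqref{CF} on a ball of radius $r<\dist(p_0,\partial\Omega)$, differentiating the kernel $\overline{(p-q)}/|p-q|^4$ under the integral sign, using its degree $-4$ homogeneity, the identification of $D(p)$ with the unit normal times surface measure, and the area $2\pi^2r^3$ of the $3$-sphere---is precisely the classical Cauchy-estimate computation the authors have in mind. The details you supply (norm multiplicativity to handle non-commutativity, keeping the factor order $K\,D\,f$) are accurate, so there is no gap.
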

 
 %It also follows from Theorem \ref{Fueterthm} that a uniformly bounded regular function is uniformly continuous on compact sets and admits an estimate for its higher derivatives. We state this result in a separate lemma for future reference.   

%\begin{lemma}\label{higherder}
%Let $f$ be a regular function on a ball $\B_R(p_0)$ of radius $R>0$ centered at a point $p_0$, and assume %that $|f|\leq M$ on $\B_R(p_0)$ for some constant $M\geq 0$. Then, for every multi-index 
%$\alpha=(\alpha_1,\alpha_2,\alpha_3,\alpha_4)\in\mathbb{N}^4$, there exists a constant $C_{\alpha}$ such that 
%$$|\partial^{\alpha}_xf(p_0)|\leq C_\alpha \frac{M}{R^{|\alpha|}}.$$
%Here $\partial^{\alpha}_x=\partial^{\alpha_1}_{x_1}\partial^{\alpha_2}_{x_2}\partial^{\alpha_3}_{x_3}\partial^{\alpha_4}_{x_4}$ and $|\alpha|=\alpha_1+\alpha_2+\alpha_3+\alpha_4$.
%\end{lemma}

As we noted above, regular functions are real analytic. Unlike the case of holomorphic functions, however, the power series of a regular function does not have a natural domain of convergence. 
The following lemma gives a lower bound for the radius of convergence of the Taylor series at $0$ of a regular function on a ball. 

\begin{lemma}\label{converg}
Let $\B_{R}\subset\H$ be the ball of radius $R$ centered at $0$ and $f\colon\B_R\rightarrow\H$ a regular function. Then the Taylor series of $f$ at $0$ converges to $f$ on the ball of radius $R(\sqrt{2}-1)$ 
centered at 
the origin.
\end{lemma}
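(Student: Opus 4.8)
The plan is to feed the Cauchy--Fueter integral formula \eqref{CF} an explicit expansion of its kernel in powers of the center variable and to read off the radius of convergence from an elementary geometric series. Fix a radius $\rho<R$; since $f$ is regular on $\B_R$ it is bounded, say $|f|\le M$, on $\overline{\B_\rho}$, so Theorem \ref{Fueterthm} applies with $\Omega'=\B_\rho$. For $p_0\in\B_\rho$ and $p\in\p\B_\rho$ the kernel equals $\overline{p-p_0}/|p-p_0|^4$, so I first rewrite the denominator. With $\rho=|p|$ one has $|p-p_0|^2=\rho^2-a$, where $a:=2\Re(\bar p\,p_0)-|p_0|^2\in\R$, and the crucial observation is that $a$ is a \emph{real scalar}, hence commutes with every quaternion. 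Thus $|p-p_0|^{-4}=\rho^{-4}(1-a/\rho^2)^{-2}=\rho^{-4}\sum_{k\ge0}(k+1)(a/\rho^2)^k$, while the numerator $\overline{p-p_0}=\bar p-\bar p_0$ is affine in $p_0$.

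Next I would track convergence. Since $|\Re(\bar p\,p_0)|\le\rho\,|p_0|$, one gets the bound $|a|/\rho^2\le 2u+u^2$, uniform in $p\in\p\B_\rho$, where $u:=|p_0|/\rho$. Hence the kernel series is dominated, up to the harmless factor $|\overline{p-p_0}|\le\rho+|p_0|$, by $\rho^{-4}\sum_{k\ge0}(k+1)(2u+u^2)^k=\rho^{-4}\bigl(1-(2u+u^2)\bigr)^{-2}$, which converges precisely when $2u+u^2<1$. This is exactly where the constant enters: solving $u^2+2u-1<0$ gives $u<\sqrt2-1$. Consequently, for $|p_0|<\rho(\sqrt2-1)$ the expansion of the kernel converges absolutely and uniformly for $p\in\p\B_\rho$.

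Finally I would turn this into a statement about $f$. Because $a$ is a real polynomial of degree at most $2$ in the coordinates of $p_0$ and $\overline{p-p_0}$ is affine, fully expanding each $a^k$ and collecting terms of equal total degree in $p_0$ converts the kernel series into a series of quaternion-valued homogeneous polynomials in $p_0$; the absolute convergence just established legitimizes this regrouping, and the uniform convergence on $\p\B_\rho$ legitimizes integrating term by term against $D(p)f(p)$ in \eqref{CF}. The resulting power series converges to $f(p_0)$ on $\B_{\rho(\sqrt2-1)}$, and by uniqueness of the coefficients of the real-analytic function $f$ it must be its Taylor series at $0$. Letting $\rho\uparrow R$ then yields convergence on $\B_{R(\sqrt2-1)}$.

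The geometric-series manipulation is routine; the only genuinely delicate points are the bookkeeping. One is verifying that the rearrangement into homogeneous pieces really reproduces the Taylor expansion, which stays clean here precisely because the scalar $a$ commutes with everything, so the noncommutativity of $\H$ only touches the affine factor $\overline{p-p_0}$. The other is justifying the interchange of summation and integration; both follow from the absolute and uniform convergence secured in the second step.
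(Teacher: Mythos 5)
Your proposal is correct and follows essentially the same route as the paper: expand the Cauchy--Fueter kernel via $1/(1-x)^2=\sum n x^{n-1}$ with the real scalar $x=2\Re(p^{-1}p_0)-|p^{-1}p_0|^2$ (your $a/\rho^2$ is exactly this quantity), note that the worst case for convergence occurs when that scalar's two terms add with the same sign, solve $2u+u^2<1$ to get $u<\sqrt2-1$, then regroup into homogeneous powers of $p_0$ and integrate term by term. Your working on $\B_\rho$ with $\rho\uparrow R$ plays the same role as the paper's reduction to $f$ continuous up to $\partial\B_R$, and your explicit appeal to uniqueness of Taylor coefficients just makes precise a step the paper leaves implicit.
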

\begin{proof}
It is not restrictive to assume that $f$ is continuous up to the boundary $\partial\B_R$. The Cauchy-Fueter formula \eqref{CF} implies, for every $p_0\in\B_R$, that
\begin{equation}\label{into}
 f(p_0)=\frac 1{2\pi^2}\int_{\p \B_R}\frac{\overline{p-p_0}}{|p-p_0|^4} D(p)f(p).
\end{equation}
We rewrite the kernel as
$$ \frac{\overline{p-p_0}}{|p-p_0|^4}=\frac{\overline{p}}{|p|^4} (1-\overline{p^{-1}p_0})\frac 1{(|1-p^{-1}p_0|^2)^2} .$$
Applying the identity $$\frac 1{(1-x)^2}=\sum_{n=1}^{+\infty} nx^{n-1}, \,\text{ with }\, x=2\Re (p^{-1}p_0 ) -|p^{-1}p_0|^2,$$ we obtain
\begin{equation}\label{uniform}
\begin{split}
\frac{\overline{p-p_0}}{|p-p_0|^4}&=\frac{\bar p}{|p|^4}(1-\overline{p^{-1}p_0})\sum_{n=1}^{+\infty} n (-2\Re (p^{-1}p_0 ) +|p^{-1}p_0|^2)^{n-1}\\ 
&=\frac{\bar p}{|p|^4}(1-\overline{p^{-1}p_0})\sum_{n=1}^{+\infty} n \sum^{n-1}_{k=0} (|p^{-1}p_0|^2)^k (-2\Re(p^{-1}p_0))^{n-1-k} \binom{n-1}{k} .
\end{split}
\end{equation}
Note that the series in \eqref{uniform} is absolutely convergent for $|p^{-1}p_0|^2+2|\Re(p^{-1}p_0)| < 1$.
The most restrictive situation is when $p^{-1}p_0$ is real, in which case we have
$$ |p^{-1}p_0|^2+2|(p^{-1}p_0)| < 1 \, \Longleftrightarrow\, |(p^{-1}p_0)|<\sqrt{2}-1 .$$
If we therefore assume $ |p_0| <(\sqrt{2} -1)R$, then we can group the terms with homogeneus powers of $p_0$, substitute \eqref{uniform} into \eqref{into} and integrate term by term. This concludes the proof.
\end{proof}

\begin{remark} The region of convergence of the power series of a regular function is a widely studied subject \cite{GHS}, and better bounds than the one proved in Lemma \ref{converg} are available in the literature. We decided to include Lemma \ref{converg} because the proof is elementary and self-contained, and the estimate obtained is sufficient for our purposes.  
\end{remark}

We recall a standard fact about power series.
\bp\label{cube} Let $\I^n_l:=\prod^n_{i=1}(-l,l) \subset \R^n$ be the $n$-dimensional cube of side $2l$ centered at $0$. Let $\sum_{\alpha\in \N^n} a_\alpha x^\alpha$ be a power series in $\R^n$. Let $|\alpha|=\sum_{j=1}^n\alpha_j$ and 
$$ c:=\limsup_{|\alpha|\to +\infty} |a_\alpha|^{\frac 1{|\alpha|}} \in [0,+\infty].$$
Then the series $\sum_{\alpha\in \N^n} a_\alpha x^\alpha$ conveges on $\I^n_l$ if and only if $1 \ge lc$.
\ep
\begin{remark}\label{rem} Assume that the regular function $f$ is uniformly bounded in $\B_R$ by a positive constant $M$. For $\alpha\in\mathbb{N}^n$ let $\partial^{\alpha}=\partial^{\alpha_1}_{x_1}\partial^{\alpha_2}_{x_2}\partial^{\alpha_3}_{x_3}\partial^{\alpha_4}_{x_4}$. It follows from Lemma \ref{converg} and Proposition \ref{cube} that 
\begin{equation} \label{highder}\left| \frac {\partial^\alpha f(0)}{\alpha !}\right|\le \frac {2^{|\alpha|}CM}{((\sqrt{2}-1)R)^{|\alpha |}} .\end{equation} 
for some constant $C$ independent of $\alpha$ and $f$.
\end{remark}
 
 %As in Avanissian original argument \cite{A67}, 
 We will need the following theorem of Lelong.
\begin{theorem}\cite[Theorem 10]{L}\label{Ll} Let $\Omega\subset \C^n$ be an open connected domain and let $D=\Omega\cap \R^n$ denote its intersection with the real subspace. Let $u_n:\Omega \rightarrow \R$ be a sequence of plurisubharmonic functions that are locally uniformly bounded from above. Assume that there exists a continuous function $g:\Omega\rightarrow \R$ such that 
$$ \limsup_{n\to +\infty}\,u_n(x) \le g(x)\quad  \forall x\in D.$$
Then for every relatively compact subset $K\subset\subset D$ and every $\epsilon >0$, there exists a positive integer $n_{K,\epsilon}$ such that, for every  $n\ge n_{K,\epsilon}$, 
$$ u_n(x) \le g(x)+\epsilon \quad \forall x\in K.$$ 
\end{theorem}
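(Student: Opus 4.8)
The plan is to argue by contradiction, combining a localization step with the compactness (normal-families) properties of plurisubharmonic functions and the theory of negligible (pluripolar) sets. First I would localize and normalize $g$. Since $K\subset\subset D$ is compact and $g$ is continuous, it suffices to prove the statement on small relatively open pieces of $K$, each contained in an arbitrarily small ball $B\subset\Omega$ centered at a real point; covering $K$ by finitely many such pieces then yields the global conclusion, and on each piece the continuity of $g$ lets me replace $g$ by a constant $c$, the error being absorbed into $\epsilon$. Thus the task reduces to showing that if $\limsup_n u_n\le c$ on $B\cap\R^n$, then $u_n\le c+\epsilon$ on a neighborhood (within $\R^n$) of the center for all large $n$.

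Next I would negate the conclusion. This produces $\epsilon_0>0$, indices $n_k\to+\infty$, and real points $x_k\to x_*\in D$ with $u_{n_k}(x_k)>c+\epsilon_0$. The $u_{n_k}$ are plurisubharmonic and locally uniformly bounded above, so by the standard compactness theorem for such families I may pass to a further subsequence which either tends to $-\infty$ locally uniformly, which is impossible here because $u_{n_k}(x_k)$ is bounded below, or converges in $L^1_{loc}$. In the latter case I set $v:=(\limsup_k u_{n_k})^*$, the upper semicontinuous regularization, which is plurisubharmonic on $B$. Using the Hartogs-type upper-semicontinuity of the family (if $v(x_*)<a$ then $v<a$ on a ball $\bar B(x_*,\rho)$, hence eventually $u_{n_k}<a$ there, hence eventually $u_{n_k}(x_k)<a$), the excess $u_{n_k}(x_k)>c+\epsilon_0$ forces $v(x_*)\ge c+\epsilon_0$.

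On the other hand, the hypothesis gives $\limsup_k u_{n_k}\le c$ at \emph{every} point of $B\cap\R^n$. By the negligible-set theorem the set $N:=\{\limsup_k u_{n_k}<v\}$ is pluripolar, so $v=\limsup_k u_{n_k}\le c$ on $(B\cap\R^n)\setminus N$. The whole argument then comes down to upgrading this to the bound $v(x_*)\le c$, which contradicts $v(x_*)\ge c+\epsilon_0$ and closes the proof.

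I expect this last transfer to be the main obstacle, and I would isolate it as a separate lemma. The difficulty is structural: the slice $D=\Omega\cap\R^n$ is totally real, hence of Lebesgue measure zero in $\C^n$, so the sub-mean-value inequality $v(x_*)\le \frac{1}{|B(x_*,r)|}\int_{B(x_*,r)}v$ samples $v$ overwhelmingly at off-real points, where only the crude bound from the uniform upper bound $M$ is available. Consequently the pointwise control on $D$ cannot be transferred by any averaging over complex balls, and the classical Hartogs lemma (whose hypothesis is on a full-measure set) does not apply. Overcoming this requires genuinely finer input, namely the non-pluripolarity of $\R^n$ in $\C^n$ together with a quantitative two-constants/capacity estimate bounding a plurisubharmonic function on a non-pluripolar totally real set by its data there. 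This is precisely the content that makes the statement a theorem of Lelong rather than a routine corollary, and in the write-up I would reduce the contradiction to it and invoke the cited result.
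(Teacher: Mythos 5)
You should know at the outset that the paper contains no proof of this statement: it is imported as a black box, quoted from Lelong's article as \cite[Theorem 10]{L}, and then applied in the proofs of the paper's main results. So your attempt cannot be compared to an internal argument; it has to stand on its own, and it does not quite do so. The skeleton up to the final step is sound: the localization replacing $g$ by a constant $c$; the contradiction setup producing real points $x_k\to x_*$ with $u_{n_k}(x_k)>c+\epsilon_0$; the extraction of an $L^1_{\loc}$-convergent subsequence (the alternative of locally uniform divergence to $-\infty$ being correctly excluded); the Hartogs-lemma argument forcing $v(x_*)\ge c+\epsilon_0$ for $v:=(\limsup_k u_{n_k})^*$; and the conclusion that $v\le c$ on $(B\cap\R^n)\setminus N$ with $N$ pluripolar. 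Note, though, that this last point already invokes the Bedford--Taylor theorem that negligible sets are pluripolar, a result far deeper (and much later) than what it is being used to prove, so it must at least be cited precisely.

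The genuine gap is the step you yourself flag as the main obstacle: passing from ``$v\le c$ on the totally real slice minus a pluripolar set'' to ``$v(x_*)\le c$''. Your diagnosis of why averaging cannot do this is correct, but your resolution --- ``reduce the contradiction to it and invoke the cited result'' --- is circular if the cited result is Lelong's Theorem 10 itself, since that is exactly the statement being proved; and if you mean some other capacity estimate, it is never stated or referenced precisely, so your write-up reduces the theorem to an unproved lemma rather than proving it. What is actually needed is: (i) removing a pluripolar set does not change the upper-regularized relative extremal function, $\omega^*(\cdot,E\setminus N,B)=\omega^*(\cdot,E,B)$; and (ii) a fat compact set $E\subset\R^n$, say a closed real cube, is pluriregular, i.e.\ $\omega^*(\cdot,E,B)\equiv -1$ on $E$. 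Item (ii) can be proved by hand for a cube: for any competitor $u$, fixing the last $n-1$ variables at real values and applying the one-dimensional extremal estimate in the first variable, then iterating coordinate by coordinate, yields $u(z)\le -\prod_{j=1}^n\bigl(-\omega_1(z_j)\bigr)$, where $\omega_1$ is the extremal function of a segment in the disc, which is continuous and equal to $-1$ on the segment because segments are non-thin at each of their points. Granting (i) and (ii), the competitor $w:=(v-M)/(M-c)$ satisfies $w\le\omega^*(\cdot,E,B)$, hence $v(x_*)\le c$, and your contradiction closes. Without this material, the proof is missing precisely the ingredient that makes the statement a theorem.
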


\section{Regular functions of several quaternionic variables}\label{Section 2}
We turn our attention to functions of several quaternionic variables. We denote by $p_1,\dots,p_n$ the coordinates in $\H^n$ and write $\bar\partial_{p_i}$ for the Cauchy-Fueter derivative in the 
direction of the variable $p_i$.

\bd Let $\Omega\subset\H^n$ be an open set. We say that a function $f\colon\Omega\rightarrow\H$ is {\em (left) regular} if $f\in C^1(\Omega)$ and it satisfies the Cauchy-Fueter equations in all variables, that is, $\bar\partial_{p_i}f=0$ in $\Omega$ for $i=1,\dots,n.$
\ed

Exploiting the Cauchy-Fueter integral formula of Theorem \ref{Fueterthm}, it is easy to show that regular functions are real analytic.
In this section we show that real analyticity follows even if the hypothesis $f\in C^1(\Omega)$ is dropped. We thus formulate this {\em a priori} weaker definition.
\bd
Let $\Omega\subset\H^n$ be an open set. We say that a function $f\colon\Omega\rightarrow\H$ is {\em separately (left) regular} if $f$ is regular in each variable $p_j$ when the other variables are given arbitrary fixed values.
\ed
It is obvious that a regular function on an open domain $\Omega\subset\H^n$ is also separately regular. In the next proposition we establish the converse.

\begin{proposition}\label{main}
Let $\Omega$ be an open domain in $\H^n$ and $f\colon\Omega\rightarrow \H$ a separately regular function. Then $f$ is regular in $\Omega$. 
\end{proposition}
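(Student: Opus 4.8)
The plan is to reduce the statement to the classical theorem of several complex variables, via the complexification technique that the paper already announces it will use. The key observation is that regularity in a single quaternionic variable can be encoded through separate harmonicity of the four real component functions. Write $f = f_0 + f_1\i + f_2\j + f_3\k$ with $f_s\colon\Omega\to\R$. Being regular in the variable $p_j$ means that the Cauchy-Fueter operator $\dbar_{p_j}$ annihilates $f$, which is a first-order system coupling the four components; applying the conjugate operator $\p_{p_j}$ and using that $\p_{p_j}\dbar_{p_j}=\dbar_{p_j}\p_{p_j}=\Delta_{p_j}$ (the Laplacian in the four real variables of the $j$-th quaternionic coordinate), I obtain that each real component $f_s$ is harmonic in the four real variables associated to $p_j$, with the other variables fixed.

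**Next I would** pass from separate quaternionic regularity to separate harmonicity in the sense of Avanissian. Since $f$ is separately regular, for each $j$ and each component $s$ the function $f_s$ is harmonic in the four real variables of $p_j$ when all remaining $4(n-1)$ real variables are held fixed. This is precisely the hypothesis of Avanissian's theorem on separately harmonic functions (\cite{A67}): a function on an open subset of $\R^N$ that is separately harmonic in each of a partition of the coordinates into blocks is in fact jointly harmonic, hence real analytic, and in particular continuous. The crux is that Avanissian's result requires no a priori regularity assumption beyond separate harmonicity — it supplies the continuity and joint real analyticity for free, which is exactly what we need since the weaker definition drops the $C^1$ hypothesis. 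The complexification appears here: Avanissian proves this by complexifying each real block and using Lelong's theorem (our Theorem \ref{Ll}) to control a family of plurisubharmonic functions built from the separately harmonic function, thereby promoting separate analyticity to joint analyticity.

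**Finally,** once each component $f_s$ is known to be jointly real analytic on $\Omega$, the function $f$ itself is $C^1$ (indeed $C^\infty$). At this point the weak definition collapses to the strong one: $f\in C^1(\Omega)$ and $\dbar_{p_j}f=0$ for every $j$, so $f$ is regular in the sense of the first definition. The last step is therefore purely formal once joint real analyticity has been established.

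**The main obstacle** I anticipate is the first reduction: carefully checking that separate quaternionic regularity yields separate harmonicity of the real components in the correct blocks of four real variables, and then verifying that the hypotheses of Avanissian's theorem are met exactly as stated (in particular that harmonicity in each block of four coordinates, not merely in individual coordinates, is what Avanissian requires). The analytic heart of the argument — promoting separate analyticity to joint analyticity — is outsourced to Avanissian's theorem and ultimately to Lelong's theorem, so the work on our side is the bookkeeping that casts regular functions into the harmonic framework where those results apply.
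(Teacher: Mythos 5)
Your proposal is correct, but it takes the shortcut that the paper explicitly acknowledges in Remark \ref{easyproof} and deliberately avoids, rather than the route of the paper's own proof. Your reduction is sound: since left multiplication by constants commutes with differentiation, $\p_{p_j}\dbar_{p_j}=\Delta_{p_j}$ as operators, so each real component of a separately regular function is harmonic in each block of four real variables (to apply $\p_{p_j}$ to $\dbar_{p_j}f=0$ you need $f$ to be $C^2$ in $p_j$, which holds because one-variable regular functions are real analytic by Theorem \ref{Fueterthm}); Avanissian's theorem \cite[Theorem 1]{A67}, which indeed assumes block-wise harmonicity and no joint regularity whatsoever, then gives joint real analyticity, after which the Cauchy-Fueter equations hold in the $C^1$ sense and $f$ is regular. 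The paper instead re-runs Avanissian's argument from scratch in the quaternionic setting: Baire category produces a strip $\B_1\times\B_\epsilon$ on which $f$ is bounded; Lemma \ref{Derivative} and the Cauchy-Fueter formula upgrade boundedness to joint regularity on the strip (Lemmas \ref{lemmaboun} and \ref{lemmacont}); then the Taylor coefficients $g_\alpha(p)$ of the $q$-expansion are complexified, the plurisubharmonic functions $u_\alpha=\log|\tilde g_\alpha|^{1/|\alpha|}$ are formed, and Lelong's Theorem \ref{Ll} propagates the convergence of the series from the strip to all of $\B_1\times\B_1$. What your approach buys is brevity and a clean separation of concerns, with the analytic heart outsourced to \cite{A67}. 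What the paper's approach buys is the machinery itself: the complexification of coefficients, the functions $u_\alpha$, and the propagation-of-convergence argument are exactly the tools reused in the proof of the main result, Theorem \ref{hangestreves}, which does not reduce to Avanissian's statement; this is the paper's stated reason for spelling out the longer proof.
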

%Before giving the proof, we make some reductions. First of all, we will only prove the statement for functions of 2 quaternionic variables, since the general case then follows by iteration. It is also not restrictive to assume that the domain of the function $f$ is the product $\B_1\times\B_1\subset\H^2$ of two unitary balls centered at the origin. Moreover we can take $f$ to be defined up to the boundary.  
\begin{remark}\label{easyproof} 
Proposition \ref{main} can be easily proved by first observing that the components of a regular function of a quaternionic variable are harmonic, and then applying \cite[Theorem 1]{A67}, which states that separately harmonic functions are harmonic. We present a full proof below, mainly following the reasoning of \cite{A67}, with some simplifications given by the Cauchy-Fueter integral formula. Spelling out the argument in full gives us an opportunity to introduce the complexification techniques that will be also used in the proof of Theorem \ref{hangestreves}.
\end{remark}
We start the proof of Proposition \ref{main} by showing that if $f$ is also assumed to be continuous in $\Omega$, then the result follows easily.
\begin{lemma}\label{lemmacont}
Let $\Omega\subset\H^n$ be an open domain and $f\colon \Omega\rightarrow\H$ a separately regular function which is also continuous in $\Om$. Then $f$ is regular in $\Om$.
\end{lemma}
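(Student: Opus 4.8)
The plan is to show that continuity together with separate regularity forces $f$ to be jointly real analytic; once that is known, regularity is immediate, since the Cauchy-Fueter equations $\dbar_{p_j}f=0$ already hold in each variable by hypothesis. The statement is local, so I would fix a point $p^0=(p_1^0,\dots,p_n^0)\in\Omega$ and choose radii so that the closed polyball $\overline{\B_1\times\cdots\times\B_n}$, a product of balls $\B_j\subset\H$ centered at $p_j^0$, is contained in $\Omega$. It then suffices to prove that $f$ is smooth on $\B_1\times\cdots\times\B_n$.

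The engine is the one-variable Cauchy-Fueter formula \eqref{CF}, applied successively in each slot. For fixed $(p_2,\dots,p_n)\in\overline{\B_2}\times\cdots\times\overline{\B_n}$, the function $p_1\mapsto f(p_1,p_2,\dots,p_n)$ is regular on a neighborhood of $\overline{\B_1}$ (because $\overline{\B_1}\times\{(p_2,\dots,p_n)\}\subset\Omega$), so Theorem \ref{Fueterthm} expresses $f$ as an integral over $\p\B_1$ of a kernel $K_1(q_1,p_1)=\overline{q_1-p_1}/|q_1-p_1|^4$ times $D(q_1)$ times $f(q_1,p_2,\dots,p_n)$. Applying the same formula to the $p_2$ slot of the integrand, then to $p_3$, and so on, and substituting, I would obtain a representation of $f(p_1,\dots,p_n)$ as an $n$-fold iterated integral over $\p\B_1\times\cdots\times\p\B_n$ whose integrand is the order-respecting product $K_1(q_1,p_1)\,D(q_1)\cdots K_n(q_n,p_n)\,D(q_n)\,f(q_1,\dots,q_n)$. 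Because $\H$ is noncommutative one must record the order of the factors carefully, but this is only bookkeeping and does not affect the estimates below.

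With this representation the dependence of $f$ on the centers $p=(p_1,\dots,p_n)$ sits entirely in the kernels $K_j(q_j,p_j)$, each of which is real analytic in $p_j$ as long as $q_j\in\p\B_j$ stays at positive distance from $p_j\in\B_j$. Since $f$ is assumed continuous, the integrand is jointly continuous on the compact product of spheres, and all $p$-derivatives of the kernels are bounded uniformly there. Hence one may differentiate under the integral sign arbitrarily many times, which shows that $f$ is $C^\infty$, and in fact real analytic, on $\B_1\times\cdots\times\B_n$. In particular $f\in C^1(\Om)$, and since $\dbar_{p_j}f=0$ for every $j$, the function $f$ is regular.

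The main obstacle is justifying the iteration and the differentiation under the integral. The continuity of $f$ is exactly what guarantees that the integrand is jointly continuous, so that Fubini applies at each substitution and the iterated integral genuinely reproduces $f$; and the positive separation between each $p_j$ and $\p\B_j$ is what lets one bound the kernel derivatives and pass them inside the integral. Both facts are secured by the local choice of the polyball, so the heart of the argument is a careful but routine differentiation-under-the-integral-sign estimate rather than any deeper analytic input. This is precisely the step where the extra hypothesis of continuity is used, and it is what will later have to be replaced, via Lelong's Theorem \ref{Ll}, in the proof of the full Proposition \ref{main}.
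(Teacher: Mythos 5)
Your proposal is correct and follows essentially the same route as the paper: apply the one-variable Cauchy--Fueter formula successively in each variable (using separate regularity for each slice), combine via Fubini — which is where continuity enters — into an iterated integral over the product of spheres, and then read off smoothness (indeed real analyticity) of $f$ from the kernel's analytic dependence on the center points, so that $f\in C^1$ and the separate Cauchy--Fueter equations give regularity. The paper's proof is just a more condensed version of this, stated for $n=2$ with the general case by iteration.
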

\begin{proof}
The only thing to prove is that $f\in C^1(\Om)$. We argue just for the case $n=2$, since the general case follows from iterating the same argument. Hence assume $\Om\subset\H^2$, and let $(p_0,q_0)\in\Omega$. Recall that we denote by $\B_{r}(p_0)\subset\H$ a ball of radius $r$ centered at $p_0$. Consider now the product of two balls $\B_{r_1}(p_0)\times\B_{r_2}(q_0)\subset\Om$. The Cauchy-Fueter integral formula \eqref{CF} implies
\begin{equation*}
\begin{split}
 f(p_0,q_0)&=\frac 1{2\pi^2} \int_{\p\B_{r_1}(p_0)}\frac{(p-p_0)^{-1}}{|p-p_0|^2}\,D(p)\,f(p,q_0)\\ &=\frac 1{2\pi^2} \,\int_{\p\B_{r_1}(p_0)}\frac{(p-p_0)^{-1}}{|p-p_0|^2}\,D(p)\bigg( 
\frac 1{2\pi^2} \int_{\p\B_{r_2}(q_0)}\frac{(q-q_0)^{-1}}{|q-q_0|^2}\,D(q)\,f(p,q)\bigg)\\ &=\frac{1}{4\pi^4}\int_{\p\B_{r_1}(p_0)\times \p\B_{r_2}(q_0)} \frac{(p-p_0)^{-1}}{|p-p_0|^2}\,D(p)\,\frac{(q-q_0)^{-1}}{|q-q_0|^2}\,D(q)\,f(p,q), 
\end{split}
\end{equation*}
 where the last equality follows from Fubini's theorem. Hence $f$ is of class $C^1$ (and actually real analytic) in $\Omega$. \end{proof}
 
The next lemma shows that continuity for a separately regular function $f$ is a consequence of being uniformly bounded on compact sets. %The proof starts as in the holomorphic version. Then uses complexification of Avanissian together with theorem of lelong, which replaces the Hartog's result.
\begin{lemma}\label{lemmaboun}
Let $\Omega\subset\H^n$ be an open domain and $f\colon \Omega\rightarrow\H$ a separately regular function which is uniformly bounded on compact subsets of $\Om$. Then $f$ is regular in $\Om$. 
\end{lemma}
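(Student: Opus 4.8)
The plan is to reduce everything to the continuity of $f$, after which Lemma \ref{lemmacont} finishes the argument. As in that lemma I would treat the case $n=2$ and then iterate, so assume $\Om\subset\H^2$ and fix $(p_0,q_0)\in\Om$; after a translation take $(p_0,q_0)=(0,0)$ and choose radii so that $\overline{\B_{r_1}}\times\overline{\B_{r_2}}\subset\Om$ and $|f|\le M$ on this set. For each fixed $q\in\B_{r_2}$ the slice $f(\cdot,q)$ is a regular function on $\B_{r_1}$ bounded by $M$, so Remark \ref{rem} applies and the Taylor coefficients $a_\alpha(q):=\p^\alpha_p f(0,q)/\alpha!$ satisfy $|a_\alpha(q)|\le 2^{|\alpha|}CM/((\sqrt2-1)r_1)^{|\alpha|}$, a bound that is crucially \emph{uniform in} $q$. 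By Lemma \ref{converg}, for each fixed $q$ one has $f(p,q)=\sum_\alpha a_\alpha(q)\,p^\alpha$ for $|p|<(\sqrt2-1)r_1$, where $p^\alpha$ denotes the monomial in the four real coordinates of $p$.

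The second step is to show that each coefficient $a_\alpha$ is continuous in $q$. Expanding the Cauchy-Fueter kernel in \eqref{into} in powers of the base point, exactly as in the proof of Lemma \ref{converg}, represents the coefficient as $a_\alpha(q)=\tfrac1{2\pi^2}\int_{\p\B_{r_1}}K_\alpha(p')\,D(p')\,f(p',q)$ for a fixed quaternion-valued kernel $K_\alpha$ that is bounded on the sphere $\p\B_{r_1}$ and does not depend on $q$. Since for each fixed $p'$ the function $q\mapsto f(p',q)$ is regular, hence continuous, and the integrand is dominated by $\|K_\alpha\|_\infty M$, dominated convergence shows that $a_\alpha$ is continuous in $q$. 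Differentiating under the integral sign, justified by the uniform derivative estimate of Lemma \ref{Derivative} applied to the slices $f(p',\cdot)$, shows moreover that the real components of $a_\alpha$ are harmonic in $q$; this is the point where the complexification of a harmonic slice into a neighborhood in $\C^4$ becomes available, should one wish to phrase the argument in the language of \cite{A67}.

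The third step combines the two. Because the bound $B_\alpha:=2^{|\alpha|}CM/((\sqrt2-1)r_1)^{|\alpha|}$ is independent of $q$, the tail estimate $\sum_{|\alpha|>N}|a_\alpha(q)|\,|p^\alpha|\le\sum_{|\alpha|>N}B_\alpha\prod_i|p^{(i)}|^{\alpha_i}$ shows that $\sum_\alpha a_\alpha(q)\,p^\alpha$ converges \emph{uniformly} on the compact polyball $\{|p^{(i)}|\le\rho\}\times\overline{\B_{r_2'}}$, for any $\rho<(\sqrt2-1)r_1/2$ and any $r_2'<r_2$; on this set $|p|<(\sqrt2-1)r_1$, so by Lemma \ref{converg} the sum really equals $f$. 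The partial sums are jointly continuous in $(p,q)$, being polynomials in $p$ with coefficients continuous in $q$ by the previous step, so their uniform limit $f$ is jointly continuous near $(0,0)$. As $(0,0)$ was arbitrary, $f$ is continuous on $\Om$, and Lemma \ref{lemmacont} gives that $f$ is regular.

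The step I expect to require the most care is the second one: establishing that the coefficient functions $a_\alpha(\cdot)$ are continuous in $q$, since one must exchange a $q$-limit, or a $q$-derivative, with the integral over $\p\B_{r_1}$ while only separate regularity of $f$ is available. The uniform bound on $q$-derivatives furnished by Lemma \ref{Derivative}, valid uniformly in $p'$ because $M$ bounds $f$ on the whole polyball, is exactly what legitimizes this exchange. I note that Lelong's theorem (Theorem \ref{Ll}) is \emph{not} needed for this lemma, precisely because the coefficient bound coming from Remark \ref{rem} is already uniform in $q$; its role, together with the full force of the complexification, is reserved for removing the boundedness hypothesis altogether in Proposition \ref{main}.
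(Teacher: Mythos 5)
Your proof is correct, but it takes a genuinely different, and much heavier, route than the paper's. The paper disposes of this lemma in two lines: on a compact neighborhood where $|f|\le M$, Lemma \ref{Derivative} applied to each one-variable slice bounds the first partial derivatives of the slices by $CM/\dist(\cdot,\p\Omega')$, so $f$ is Lipschitz separately in each variable $p_j$ with a single uniform constant; the triangle inequality then yields joint continuity, and Lemma \ref{lemmacont} concludes. You instead prove joint continuity through the series machinery: uniform-in-$q$ coefficient bounds from Remark \ref{rem}, continuity of each coefficient $a_\alpha(\cdot)$ via dominated convergence applied to the Cauchy--Fueter representation with a fixed bounded kernel $K_\alpha$, and a geometric tail estimate giving uniform convergence of $\sum_\alpha a_\alpha(q)\,p^\alpha$ on a small polyball, so that $f$ is a uniform limit of jointly continuous partial sums. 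These steps are all sound: the kernel derivatives at the origin are indeed bounded on the sphere and independent of $q$; the slices $f(p',\cdot)$ are continuous because they are regular, so dominated convergence applies; and since the number of multi-indices with $|\alpha|=k$ grows only polynomially, your constraint $\rho<(\sqrt{2}-1)r_1/2$ does produce a convergent geometric majorant. (Your side remark that the components of $a_\alpha$ are harmonic in $q$ is true but unnecessary, and worth stating carefully if kept, since left multiplication by a constant quaternion preserves harmonicity of components but not left regularity.) What the paper's argument buys is economy: equicontinuity is extracted directly from Lemma \ref{Derivative}, with no series expansion at all. What yours buys is that it rehearses, in the bounded setting, exactly the coefficient-wise analysis (Remark \ref{rem}, Lemma \ref{converg}, integral representation of Taylor coefficients) that the paper deploys later; and your closing observation that Lelong's theorem is not needed here is accurate, as the paper reserves it for removing the boundedness hypothesis in Proposition \ref{main}.
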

\begin{proof}
Lemma \ref{Derivative} implies that $f$ is uniformly Lipschitz separately in the variables $p_j$ with the same Lipschitz constant for all $j=1,\dots,n$. Hence $f$ is jointly continuous in the variables $p_j$, and therefore regular by Lemma \ref{lemmacont}.
%Once again we argue only for $n=2$, since the general case follows by iterating the same argument. We thus assume that $\Om\subset\H^2$. Da qui seguire libro di Beppe Lemma 1.5.1. non capisco pero' la sua proof.
\end{proof}

\begin{proof}[Proof of Proposition \ref{main}]
We will only prove the statement for regular functions of two quaternionic variables, since the general case then follows by iteration. It is also not restrictive to assume that the domain of the function $f$ is the product $\B_1\times\B_1\subset\H^2$ of two unitary balls centered at the origin. Additionally, we can take $f$ to be defined up to the boundary. For each $n\in\mathbb{N},$ let 
$$ E_n:=\{ q\in\B\,\colon\,|f(p,q)| \le n \,\,\,\forall p\in \overline{\B}_1 \}.$$
Clearly $E_n\subset E_{n+1}$ for every $n\in\mathbb{N}$. Moreover, 
$$ \bigcup_{n\in\N}E_n =\overline{\B}_1 .$$
The continuity of $f(p,\cdot)$ for fixed $p$ implies that if $q_k\to q$, with $q_k\in E_n$ for all $k$, then $q\in E_n$. Thus $E_n$ is closed for every $n$. By the Baire category theorem, there exists $M$ such that $E_M$ has non-empty interior. Assume that $0$ is in the interior of $E_M$. Then $f$ is uniformly bounded on $\B_1\times \B_\epsilon$ for some $\epsilon>0$. Indeed \[ |f(p,q)|\leq M\quad \forall (p,q)\in \B_1\times \B_\epsilon. \]
It follows from Lemma \ref{lemmaboun} that $f$ is regular and hence analytic on $\B_1\times \B_\epsilon$. At this stage, we can even forget that $f$ is separately regular in $p$ outside the strip $\B_1\times \B_\epsilon$. We will exploit the separate regularity in the variable $q$ to prove that $f$ is regular in $\B_1\times \B_1$.

Consider the Taylor series of $f$ at $0$ with respect to the variable $q$:
\begin{equation} \label{equazione5} f(p,q)= \sum_{\alpha\in \N^4} \frac{\p^\alpha f(p,0)}{\alpha !}\,q^\alpha. \end{equation}
We have used the notation $q^\alpha(p):=y_0^{\alpha_0}y_1^{\alpha_1}y_2^{\alpha_2}y_3^{\alpha_3}$, $\p^\alpha(p):=\p_{y_0}^{\alpha_0}\p_{y_1}^{\alpha_1}\p_{y_2}^{\alpha_2}\p_{y_3}^{\alpha_3}$, where the $y_j$ are real coordinates for $q$, that is, $q=y_0+y_1\i+y_2\j+y_3\k$. 

For every multi-index $\alpha$, the function \[g_{\alpha}(p):=\frac{\p^\alpha f(p,0)}{\alpha !}\] is regular in $p=x_0+x_1\i +x_2\j +x_3\k$.
By Remark \ref{rem} we have 
\begin{equation}\label{pipp}
|g_\alpha (p)|\le \frac {2^{|\alpha|}C M}{ ((\sqrt{2}-1)\epsilon)^{|\alpha|}}\quad \forall p\in \mathbb{B}_1.
\end{equation}
Since $g_\alpha$ is left regular, the Cauchy-Fueter formula implies
\begin{equation}\label{pq}
g_\alpha(p)= \frac 1{2\pi^2}\int_{\p \B_R}\frac{ (\xi_0-x_0)-(\xi_1-x_1)\i-(\xi_2-x_2)\j-(\xi_3-x_3)\k } {( (\xi_0-x_0)^2+(\xi_1-x_1)^2+(\xi_2-x_2)^2 +(\xi_3-x_3)^2 )^2} D(\zeta)g_{\alpha}(\zeta),
\end{equation}
where $\xi=\xi_0+\xi_1\i+\xi_2\j+\xi_3\k$. Since the kernel in the integral is a rational function, we can follow the method of Avanissian \cite[Section 3]{A67} and consider a complexification of $g_{\alpha}$ by taking  $x_j\in \C$ in equation \eqref{pq}.
We thus obtain a function $\tilde{g}_{\alpha}$ of the complexified variable $p^{\C}$ with values in $\C\otimes \H$ defined on an open subset $\Gamma$ of $\C\otimes\H$. Under the natural identification of $\C\otimes\H$ with $\C^4$, we have
$$ 
\Gamma =\Big{\{} x=(x_0,x_1,x_2,x_3)\in \C^4\, \big{|}\, \sum_{l=0}^3(\xi_l-x_l)^2\neq 0 \text{ for all } \xi \in \p \B_1 \Big{\}}.
$$  
Consider the connected component of $\Gamma$ that contains $\B_1$ and call it again $\Gamma$. The fact that $g_{\alpha}$ is uniformly bounded on $\B_1$ propagates to the complexification. Indeed, exploiting the integral formula \eqref{pq} and the estimate \eqref{pipp}, we see that for every compact set $K\subset \Gamma$ there exists a constant $\tau_K >0$ such that  
\begin{equation}\label{bdalfa}
 \sup_{K}{|\tilde{g}_\alpha|} \le \frac {2^{|\alpha|}C M}{ ((\sqrt{2}-1)\epsilon)^{|\alpha|}}(1+\tau_K).
\end{equation}   
For every multi-index $\alpha$, consider the function $u_{\alpha}$ defined on $\Gamma$ by
$$ u_\alpha (x) :=\log \left| \tilde{g}_{\alpha}(x)\right |^{\frac 1{|\alpha|}} .$$
The functions $u_\alpha$ are plurisubharmonic and by \eqref{bdalfa} they are locally uniformly bounded from above. 
Since $f$ is regular in the variable $q$ on the ball $\B_1$, Lemma \ref{converg} implies that the series  \eqref{equazione5} converges on the ball of radius $\sqrt{2}-1$, and in particular on the cube $\I_{\frac{\sqrt{2}-1}{2}}$ of side $\sqrt{2}-1$ centered at $0$. By Proposition \ref{cube}, we have
$$ \limsup_{|\alpha|\to +\infty} u_\alpha(x) \le \log\bigg(\frac{2}{\sqrt{2}-1}\bigg) .$$
Theorem \ref{Ll} then implies that the series \eqref{equazione5} converges uniformly on $\B_1\times \I_{\frac{\sqrt{2}-1}{2}}$, and so $f$ is smooth there. By repeating this argument, one can prove that $f$ is smooth everywhere in $\B_1\times\B_1$, and therefore regular there.
\end{proof}

\section{Propagation of regular extension across the boundary}\label{Section 3}
%With a little modification of our argument we can prove a propagation of regularity principle similar to the Hanges and Treves propagation Theorem.
We begin this section by recalling some notation. We denote by $\B_{R}(p)$ a ball of radius $R$ in $\H$ centered at a point $p\in\H$, and by $\B^{\C}_{R}(p^{\C})$ a ball of radius $R$ in $\C\otimes \H$ centered at a point $p^{\C}\in\C\otimes\H$. Moreover, we write $\I_R(p)$ for the cube $\prod_{j=0}^3(x_j-R,x_j+R)$ in $\H$ of side $2R$ centered at $p=x_0+x_1\i +x_2\j+x_3\k\in\H$. If the center (either of the ball or the cube) is not specified, then it is assumed to be the origin.

\bd  Let $\Omega\subset\H^2$ be an open set and let $f$ be a regular function on $\Omega$. For every point $(p_0,q_0)\in\Omega$ we define the quantity %$r_q(p_0,q_0)$ in the $q$ direction at a point $(p_0,q_0) \in\Omega$ as:
\begin{equation*}
r_q(p_0,q_0):=\left( \limsup_{|\alpha|\to +\infty} \left| \frac{(\p^\alpha f (p_0,q))_{|q=q_0}}{\alpha !}\right|^{\frac 1 {|\alpha|}} \right)^{-1}. 
\end{equation*}
\ed
Note that $r_q(p_0,q_0)$ measures the convergence of the Taylor series of $f$ at $(p_0,q_0)$ with respect to the variable $q$. Such series converges for $q$ varying in the cube $\I_{r_q(p_0,q_0)}(q_0)$. If $p_0\times \B_R(q_0) \subset \Omega$, then Lemma \ref{converg} implies $r_q(p_0,q_0)\ge R(\sqrt{2} -1)$. 

%We observe that if we have a uniform estimate on the radius then we can infer some characteristics on the domain of regularity of $f$.
\bp \label{proposizione4} Let $f$ be a regular function on $\B_1\times\B_\epsilon$, and suppose that $$r_q(p,0) \ge R\text{ for all }p\in \B_1.$$ Then there exists a regular function $F$ on $\B_1\times \I_{R}$
such that  $F_{|\B_1\times\B_\epsilon} =f$. \ep
\begin{proof} The function $f$ is jointly regular in the strip $\B_1\times\B_\epsilon$. Moreover, we have by hypothesis that for every fixed $p\in\B_1$ the function $f(p,q)$ is regular in $\I_{R}$ as a function of the variable $q$. We wish to conclude that $f$ is regular in $\B_1\times \I_{R}$. This has already been achieved in the proof of Theorem \ref{main}. Recall thet a key role in this step was played by Lelong's Theorem \ref{Ll}. 
\end{proof}

\bt\label{hangestreves} Let $\Omega\subset \H^2$ be a domain with $C^2$ boundary $\partial\Omega$, and assume that $\gamma:=\B_1\times \{0\}$ is contained in $\p \Omega$. Let $U$ be a neighborhood of $(1,0)$ in $\H^2$,
and suppose that $f$ is a regular function on $\Omega\cup U$. Then there exists a neighborhood $W$ of $\gamma$ in $\H^2$ and a regular function $F$ on $W$ such that $F_{|W\cap \Omega}=f_{|W\cap \Omega}$.
That is, there exists a regular extension of $f$ to a neighborhood of $\gamma$.
\et
\begin{proof}
Let $\Omega$ be locally defined by $\rho(p,q)<0$. We can assume without loss of generality that $$\rho(p,q)=\rho(x_0,\dots,x_3,y_0,\dots,y_3)=y_0+o(x^2,y^2).$$ 
Moreover, we can assume that $\nabla\rho\,(p,0)=(0,1+\sigma (p))$, where $|\sigma (p)|\le \epsilon ' |p|$ for $\epsilon '$ small.
For $\epsilon >0$ consider the translated ball $\B_1\times \{ -\epsilon\}$. Note that  for some $\epsilon'>0$ we have that $\B_1\times \B_{\epsilon (1-\epsilon ')}(-\epsilon)$ is contained in $\Omega$. We complexify $f$ with respect to both variables $p$ and $q$ and consider the Taylor series with respect to $q$ at $(0,-\epsilon)$ of the complexified function $\tilde{f}$. Then
\begin{equation}\label{seriec} \tilde{f}(p^\C,q^\C) =\sum^{+\infty}_{|\alpha |=1}\frac{\partial^{\alpha}_q \tilde{f}(p^\C,-\epsilon)}{\alpha !}(q^{\C}-\epsilon)^\alpha. \end{equation}
Here $p^\C$ and $q^{\C}$ denote the complexified variables in the sense already described in the proof of Proposition \ref{main}.
We now consider the complex disc $D$ parameterized by $\tau\to (\tau,0,0,0,-\epsilon,0,0,0)$, $|\tau|\le 1-\epsilon''$. For some small value of $\epsilon''$, the disc $D$ lies entirely inside the domain of the complexified function $\tilde{f}$. For simplicity, we rescale to make the radius of $D$ equal to 1. We now consider the restriction to $D$ of the coefficients of the series \eqref{seriec}. We let
$$ u_\alpha (\tau):= \log\left|\frac{\partial^{\alpha}_q \tilde{f}(\tau,-\epsilon)}{\alpha !}\right|^{\frac{1}{|\alpha|}}.$$
The functions $u_\alpha$ are subharmonic and uniformly bounded from above by Remark \ref{rem} and equation \eqref{bdalfa}.
By the submean property we have 
$$ u_{\alpha}(0) \le \frac {1}{2\pi} \int_0^{2\pi} u_{\alpha}(e^{i\theta})d\theta. $$
Fatou's lemma yields
\begin{equation}\label{F} \limsup_{|\alpha|\to +\infty} u_{\alpha}(0) \le \frac {1}{2\pi} \int_0^{2\pi} \limsup_{|\alpha| \to +\infty } u_{\alpha}(e^{i\theta})d\theta.
\end{equation}
 By the convergence of \eqref{seriec} we have that, up to taking a slightly smaller $\epsilon$, 
 
 \begin{equation}\label{43}
 \limsup_{|\alpha| \to +\infty } u_{\alpha}(e^{i\theta})\leq -\log((\sqrt{2}-1) \epsilon)\quad  \forall \theta.
 \end{equation}
  By hypothesis, the function $f$ is regular on the product of two small balls of radius $2\delta$ centered at the point $(1,-\epsilon)$. It is therefore possible to complexify $f$ on the product $\B^{\C}_{ \delta }(1) \times \B^{\C}_{ \delta }(-\epsilon)$. Hence, when $\theta$ is close to $0$, the series in \eqref{seriec} evaluated at $p^{\C}=e^{i\theta}$ converges on a ball of radius at least $\delta$. For these values of $\theta$, the estimate \eqref{43} can be therefore improved to 
   \begin{equation*}
 \limsup_{|\alpha| \to +\infty } u_{\alpha}(e^{i\theta})\leq -\log(\delta).
 \end{equation*}
 We obtain
\begin{equation*}
\begin{split}
 \limsup_{|\alpha| \to +\infty } u_\alpha(0)&\le \frac 1{2\pi} \Big(-(2\pi -\delta) \log((\sqrt{2}-1)\epsilon) - \delta \log((\sqrt{2}-1)\delta)\Big)\\&=\frac{\delta}{2\pi}\log\frac \epsilon\delta -\log((\sqrt{2}-1)\epsilon). 
 \end{split}
 \end{equation*}
By taking the exponential we have 
$$  \limsup_{|\alpha| \to +\infty } \left|\frac{\partial^{\alpha}_q \tilde{f}(0,-\epsilon)}{\alpha !}\right|^{\frac{1}{|\alpha|}} \le \frac {1}{(\sqrt{2}-1)\epsilon} \left( \frac\epsilon\delta\right)^{\frac \delta{2\pi}},$$
which implies that $r_q(0,-\epsilon) \ge \epsilon(\sqrt{2}-1)  \left( \frac\delta\epsilon\right)^{\frac \delta{2\pi}} $. This estimate holds true for all $p$ close to $0$.
By choosing $\epsilon$ small enough, the radius of convegence is uniformly greater than $\epsilon$, and therefore the series defining $f$ converges on a neighborhood of $(0,0)$, thus extending $f$ past the boundary by Proposition \ref{proposizione4}.
\end{proof}

\section{Acknowledgements}
The authors would like to thank the anonymous referee who pointed out a mistake in the first version of this paper. We also acknowledge useful comments from a second referee. Their suggestions helped to improve the exposition and the accuracy of the paper.

\end{document}